\pgfplotsset{compat=1.6}
\newcommand{\rus}[1]{\selectlanguage{russian}{\fontfamily{erewhon-TLF}\fontsize{9pt}{11pt}\selectfont #1}\selectlanguage{USenglish}}
\newcommand{\RR}{\mathbb{R}}                                     % real numbers
\newcommand{\abs}[1]{\left\lvert#1\right\rvert}                  % absolute value, variable sized delimiters
\newcommand{\mnorm}[1]{\left\lVert#1\right\rVert}                % another norm, variable sized delimiters
\newcommand{\setn}[1]{\left\{#1\right\}}                         % set, variable sized delimiters
\newcommand{\setcond}[2]{\left\{#1 \:\middle\vert\: #2\right\}}  % set with condition, variable sized delimiters
\newcommand{\defeq}{\mathrel{\mathop:}=}                         % defining equality
\newcommand{\lr}[1]{\!\left(#1\right)}                           % variable sized parentheses
\newcommand{\mo}[3][]{U_{#1}(#2,#3)}                             % open ball with respect to a gauge
\newcommand{\mc}[3][]{B_{#1}\left(#2,#3\right)}                             % closed ball with respect to a gauge
\newcommand{\ms}[3][]{S_{#1}(#2,#3)}                             % sphere with respect to a gauge
\newcommand{\strline}[2]{\left\langle#1,#2\right\rangle}         % straight line
\newcommand{\clray}[2]{\left[#1,#2\right\rangle}                 % closed ray
\newcommand{\skpr}[2]{\left\langle#1 \,\middle\vert\, #2\right\rangle}               % inner product
\newcommand{\eqdef}{=\mathrel{\mathop:}}                                             % reversed defining equation
\DeclareMathOperator{\bisec}{bsc}
\DeclareMathOperator{\CONE}{cone}
\newcommand{\cone}[1]{\CONE\left(#1\right)}
\DeclareMathOperator{\LIN}{lin}
\newcommand{\lin}[1]{\LIN\left(#1\right)}
\theoremstyle{plain} 
\newtheorem{Satz}{Theorem}[section]\newtheorem{Kor}[Satz]{Corollary}
\newtheorem{Lem}[Satz]{Lemma}
\newtheorem{Prop}[Satz]{Proposition}
\theoremstyle{definition} % definition, condition, problem, example
\newtheorem{Def}[Satz]{Definition}
\newtheorem{Bsp}[Satz]{Example}
\crefname{Satz}{Theorem}{Theorems}
\crefname{Prop}{Proposition}{Propositions}
\crefname{Lem}{Lemma}{Lemmas}
\crefname{Kor}{Corollary}{Corollaries}
\crefname{Bem}{Remark}{Remarks}
\crefname{Bsp}{Example}{Examples}
\crefname{Def}{Definition}{Definitions}
\numberwithin{equation}{section}
\renewcommand*{\eqref}[1]{%
  \hyperref[{#1}]{\textup{\tagform@{\ref*{#1}}}}%
}
  \DeclareTextCommandDefault\textcommabelow[1]
\hmode@bgroup\ooalign{\null#1\crcr\hidewidth\raise-.31ex
     \hbox{\check@mathfonts\fontsize\ssf@size\z@
     \math@fontsfalse\selectfont,}\hidewidth}\egroup}%
\begin{document}
\allowdisplaybreaks
\parindent 0pt

\title{Coproximinality of linear subspaces in generalized Minkowski spaces}

%    Remove any unused author tags.

%    author one information
\author{Thomas Jahn}
\address[T. Jahn]{Faculty of Mathematics, Technische Universität Chemnitz, 09107 Chemnitz, Germany}
\email{thomas.jahn@mathematik.tu-chemnitz.de}
\thanks{}

%    author two information
\author{Christian Richter}
\address[C. Richter]{Institute for Mathematics, Friedrich Schiller University, 07737 Jena, Germany}
\email{christian.richter@uni-jena.de}
\thanks{}

\subjclass[2010]{41A65, 46B20, 52A21}

\keywords{best coapproximation, bisector, coproximinal, gauge, Hilbert space, Minkowski space, norm}

\date{}

\begin{abstract}
We show that, for vector spaces in which distance measurement is performed using a gauge, the existence of best coapproximations in $1$-codimensional closed linear subspaces implies in dimensions $\geq 2$ that the gauge is a norm, and in dimensions $\geq 3$ that the gauge is even a Hilbert space norm.
We also show that coproximinality of all closed subspaces of a fixed dimension implies coproximinality of all subspaces of all lower finite dimensions.
\end{abstract}

\maketitle

\section{Introduction}\label{chap:intro}
A basic approximation task in Banach spaces $X$ requires to \enquote{replace} an element $y\in X$ by an element $x$ of a subset $K\subset X$ subject to some notion of proximity.
This can be implemented by the notion of best approximation, a vivid research topic whose Euclidean subcase is a common topic of undergraduate calculus or optimization courses.
In the theory of best approximation by elements of linear subspaces (or, more generally, convex subsets) $K\subset X$, see Singer's monograph \cite{Singer1970}, this notion gives rise to a set-valued operator $P_K:X\rightrightarrows X$, and one is interested in characterizations of \emph{proximinal sets} and of \emph{Chebyshev sets}, i.e., sets $K\subset X$ for which $P_K(x)$ is non-empty or a singleton for all $x\in X$, respectively.
Franchetti and Furi \cite{FranchettiFu1972} introduced a companion notion called \emph{best coapproximation} in the setting of normed spaces, yielding another set-valued operator $Q_K:X\rightrightarrows X$.
Notions of \emph{coproximinal sets} and \emph{coChebyshev sets} can be defined and investigated analogously, see \cite{AbuSirhanAl2014}.
Berens and Westphal \cite{BerensWe1980} show that a coChebyshev subset $K$ of a Hilbert space $H$ is necessarily a translate of a closed linear subspace.
Papini and Singer \cite{PapiniSi1979} link best coapproximations to Birkhoff orthogonality, provide characterizations in terms of linear functionals, give examples in normed spaces of real-valued continuous functions defined on an interval, and discuss properties of best coapproximations viewed as a set-valued operator.
(In the context of these results, also \cite[Theorem~3.2]{HorvathLaSp2015} has to be mentioned, where it is shown that in Banach spaces of dimension $\geq 3$, symmetry of the Birkhoff orthogonality relation already implies Hilbertianity.)
Franchetti and Furi \cite[Lemma~1(d) and Theorem~1]{FranchettiFu1972} show that each closed $1$-codimensional linear subspace $K$ of a Banach space $X$ of dimension $\geq 3$ is coproximinal if and only if $X$ is a Hilbert space, and that straight lines are always coproximinal in normed spaces.

The contributions of the present paper are motivated by these very results of \cite{FranchettiFu1972}, and they take place in the much broader setting of \emph{generalized Minkowski spaces}.
Those are real vector spaces $X$ equipped with a function $\gamma:X\to\RR$ (called a \emph{gauge}) which takes only non-negative values, vanishes only at $0\in X$, and meets the conditions $\gamma(\lambda x)=\lambda \gamma(x)$ and $\gamma(x+y)\leq\gamma(x)+\gamma(y)$ for all $\lambda>0$ and $x,y\in X$.
This way, every norm on $X$ is a gauge, but not vice versa.
(Norms are precisely the gauges $\gamma$ for which $\gamma(x)=\gamma(-x)$.)
Nonetheless, basic concepts from classical functional analysis can be set up in generalized Minkowski spaces in an analogous way, see Cobza{\textcommabelow{s}}'s monograph \cite{Cobzas2013}.
The number $\gamma(x-y)$ is still interpreted as a distance from $y$ to $x$, although in the gauge distance it need not coincide with the distance from $x$ to $y$ anymore.
This enables the translation of distance-based concepts of approximation theory to generalized Minkowski spaces.
Apart from functional analysis \cite{Alimov2003} and approximation theory \cite{Alimov2014b}, the study of generalized Minkowski spaces, which dates back to Minkowski \cite{Minkowski1896a}, has also attracted researchers in location science \cite{PlastriaCa2001} and convex geometry \cite{Makeev2003}.

Combining the setting of generalized Minkowski spaces and the notion of best coapproximation, we show in \cref{result:straight-line-coproximinality} that coproximinality of straight lines characterizes normed spaces among generalized Minkowski spaces in dimensions $\geq 2$, and extend the characterization of Hilbert spaces of dimension $\geq 3$ via coproximinality of $1$-codimensional linear subspaces to all generalized Minkowski spaces in \cref{result:characterization-hilbert}.
Note that we allow for infinite-dimensional vector spaces $X$ in this paper.
This was not the case in the authors' past papers on generalized Minkowski spaces such as \cite{JahnKuMaRi2015}.

\section{Background and notation}
The following definition is taken from \cite[Definition~3.10]{Jahn2019a}.
\begin{Def}
Let $(X,\gamma)$ be a generalized Minkowski space and $K\subset X$.
A point $x\in K$ is called a \emph{best coapproximation of $y\in X$ in $K$} if $\gamma(x-z)\leq\gamma(y-z)$ for all $z\in K$.
The set of all best coapproximations of $y$ in $K$ shall be denoted by $Q_K(y)$.
\end{Def}
The authors of \cite{AbuSirhanAl2014} introduce the notion of \emph{coproximinal sets}, meaning sets $K$ for which $Q_K(x)$ is non-empty for all $x\in X$.
We shall use this term also in the setting of generalized Minkowski spaces.

In \cite{Ma2000}, the \emph{bisector} $\bisec_\gamma(x,y)=\setcond{z\in X}{\gamma(z-x)=\gamma(z-y)}$ has been introduced to the setting of generalized Minkowski spaces and investigated in the case of $\dim(X)\in\setn{2,3}$.
Clearly, knowing the shape of the bisector $\bisec_{\gamma^\vee}(x,y)$ taken with respect to the gauge $\gamma^\vee:X\to\RR$ defined by $\gamma^\vee(x)\defeq \gamma(-x)$ is of interest for the study of best coapproximation.

The study of gauges is determined by the geometry of their \emph{balls}
\begin{equation*}
\mc[\gamma]{y}{\lambda}\defeq\setcond{x\in X}{\gamma(x-y)\leq \lambda}, \quad
\mo[\gamma]{y}{\lambda}\defeq\setcond{x\in X}{\gamma(x-y)<\lambda},
\end{equation*}
and spheres
\begin{equation*}
\ms[\gamma]{y}{\lambda}\defeq\setcond{x\in X}{\gamma(x-y)= \lambda}.
\end{equation*}
Balls are convex sets, i.e., sets which contain the whole \emph{line segment} $[x,y]\defeq\setcond{x+\lambda (y-x)}{0\leq \lambda\leq 1}$ with each two points $x$ and $y$ they contain.
We shall use the notations $\strline{x}{y}$ for the \emph{straight line} passing through points $x,y\in X$, $\clray{x}{y}\defeq\setcond{x+\lambda(y-x)}{\lambda\geq 0}$ for the \emph{ray} starting at $x$ and passing through $y$, $\cone{A}\defeq\setcond{\lambda x}{x\in A,\lambda >0}$ for the \emph{conical hull} of a set $A\subset X$, and $\lin{A}$ for its \emph{linear hull}.
For $A_1,A_2\subset X$ and $x\in X$, we write $A_1\pm A_2\defeq\setcond{a\pm b}{a\in A_1,b\in A_2}$ and $x+A_1=A_1+x\defeq\setn{x}+A_1$.
A linear subspace $H$ of $X$ is said to be \emph{$1$-codimensional} if there exists a vector $v\in X\setminus H$ such that $X=\setcond{x+\alpha v}{x\in H,\alpha\in\RR}$.

Using the notion of balls, one may write for the set of best coapproximations
\begin{equation*}
Q_K(y)=K\cap\bigcap_{z\in K}\mc[\gamma]{z}{\gamma(y-z)}=K\cap\bigcap_{z\in K, \lambda>0:\,y\in\mc[\gamma]{z}{\lambda}}\mc[\gamma]{z}{\lambda}.
\end{equation*}

\section{Characterizing normed spaces in terms of coproximinality}
Coproximinality of all straight lines characterizes $2$-dimensional normed spaces among the $2$-dimensional generalized Minkowski spaces.
\begin{Lem}\label{result:straight-line-dimension-2}
Let $(X,\gamma)$ be a generalized Minkowski space with $\dim(X)=2$.
The following statements are equivalent.
\begin{enumerate}[label={(\roman*)},leftmargin=*,align=left,noitemsep]
\item{The gauge $\gamma$ is a norm.\label{norm}}
\item{Every straight line $K\subset X$ is coproximinal.\label{straight-line-coproximinality}}
\end{enumerate}
\end{Lem}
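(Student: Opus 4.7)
The direction (i)$\Rightarrow$(ii) is immediate: if $\gamma$ is a norm then $(X,\gamma)$ is a $2$-dimensional normed space, and the classical theorem of Franchetti and Furi~\cite[Lemma~1(d)]{FranchettiFu1972} (cited in the introduction) asserts that every straight line in a normed space is coproximinal.

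For (ii)$\Rightarrow$(i), I argue by contraposition: assuming $\gamma\ne\gamma^\vee$, I plan to exhibit a straight line that fails to be coproximinal. Since coproximinality of an affine line depends only on its direction, it suffices to look at lines of the form $K=\RR u$. Substituting $\tau=\alpha-\beta$ in the defining inequality $\gamma(\alpha u-\beta u)\le\gamma(y-\beta u)$ shows that $\alpha u\in Q_{\RR u}(y)$ iff $y-\alpha u\in C_u$, where
\[
C_u\defeq\setcond{w\in X}{\gamma(\tau u)\le\gamma(w+\tau u)\text{ for all }\tau\in\RR}
\]
is a cone containing the origin. Thus $K=\RR u$ is coproximinal iff $C_u+\RR u=X$, and since $\dim(X)=2$ this fails precisely when $C_u$ lies entirely in one of the two closed half-planes bounded by $\RR u$.

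To produce such a direction, I use that $f(u)\defeq\gamma(u)/\gamma(-u)$ is continuous on $X\setminus\{0\}$ with $f(u)f(-u)=1$, so by the intermediate value theorem there exists $u_0$ with $\gamma(u_0)=\gamma(-u_0)$. For this $u_0$, the map $\tau\mapsto\gamma(\tau u_0)$ is the V-shape $|\tau|\gamma(u_0)$, and the sublinear horofunctions
\[
\phi_\pm(w)\defeq\lim_{\tau\to\pm\infty}\lr{\gamma(w+\tau u_0)-|\tau|\gamma(u_0)}
\]
provide necessary conditions $\phi_\pm(w)\ge0$ for $w\in C_{u_0}$. Exploiting the asymmetry of $\mc[\gamma]{0}{1}$, I aim to show that $u_0$ can be chosen so that the supporting hyperplanes of $\mc[\gamma]{0}{1}$ at the antipodal boundary points $\pm u_0/\gamma(u_0)$ are not parallel; summing their outward normals then yields a nonzero linear functional $\ell$ on $X$ with $\ell(u_0)=0$ and $\ell|_{C_{u_0}}\le0$. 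Consequently $C_{u_0}$ is trapped in the closed half-plane $\{\ell\le0\}$ bounded by $\RR u_0$.

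With such $u_0$ and $\ell$ in hand, any $y$ with $\ell(y)>0$ satisfies $\ell(y-\alpha u_0)=\ell(y)>0$ for every $\alpha\in\RR$, so $y-\alpha u_0\notin C_{u_0}$ and $Q_{\RR u_0}(y)=\emptyset$, meaning $K=\RR u_0$ is not coproximinal. The main obstacle will be the geometric claim in the third paragraph that the hypothesis $\mc[\gamma]{0}{1}\ne-\mc[\gamma]{0}{1}$ forces the existence of some $u_0$ with $\gamma(u_0)=\gamma(-u_0)$ at which the antipodal supports fail to be parallel; once this is settled, the remaining steps are routine.
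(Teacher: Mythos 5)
Your reduction of coproximinality of $K=\RR u$ to the condition $C_u+\RR u=X$, and the observation that in the plane this fails exactly when the cone $C_u$ is trapped in a closed half-plane bounded by $\RR u$, are both correct, and the direction \ref{norm}$\Rightarrow$\ref{straight-line-coproximinality} is handled exactly as in the paper. The problem is the geometric claim you defer to the end: it is \emph{false} that $\mc[\gamma]{0}{1}\neq-\mc[\gamma]{0}{1}$ forces the existence of a direction $u_0$ with $\gamma(u_0)=\gamma(-u_0)$ at which the two antipodal supporting lines are non-parallel. Take for $\mc[\gamma]{0}{1}$ the union of the upper half of the ellipse $x^2/a^2+y^2/b^2\le 1$ (with $b>a$) and the lower half of the disc $x^2+y^2\le a^2$. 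This is a convex body, not symmetric about $0$, whose radial function satisfies $r(\theta)>r(\theta+\pi)$ for all $\theta\in(0,\pi)$; hence the only directions with $\gamma(u_0)=\gamma(-u_0)$ are $\pm(1,0)$, and there the supporting lines at $(\pm a,0)$ are both vertical, i.e.\ parallel. So your intermediate-value argument can land you precisely on an affine diameter, and the construction of $\ell$ breaks down.

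The requirement $\gamma(u_0)=\gamma(-u_0)$ is in fact not needed anywhere in your scheme: normalize the two horofunctions separately, $\phi_+(w)=\lim_{\tau\to+\infty}\lr{\gamma(w+\tau u_0)-\tau\gamma(u_0)}$ and $\phi_-(w)=\lim_{\tau\to-\infty}\lr{\gamma(w+\tau u_0)-\abs{\tau}\gamma(-u_0)}$, and all you need is that the chord of $\mc[\gamma]{0}{1}$ through $0$ with endpoints $u_0/\gamma(u_0)$ and $-u_0/\gamma(-u_0)$ is not an affine diameter. That such a chord exists whenever $0$ is not the center of symmetry is exactly the result of Soltan that the paper invokes, see \cite[4.1]{Soltan2005}. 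With that replacement (and some extra care at non-smooth boundary points, where the supporting functional is not unique and $\setcond{w}{\phi_\pm(w)\ge 0}$ is a union of half-planes rather than a single one), your duality argument can be completed. Note, however, that the paper avoids the asymptotic/duality machinery altogether: from the non-affine-diameter chord $[x_0,x_1]\ni 0$ it produces a parallel chord $[y_0,y_1]$ with $y_1-y_0=\lambda(x_1-x_0)$ and $\lambda>1$, and observes that $Q_K(y_1)$ is contained in $K\cap\mc[\gamma]{0}{1}\cap\mc[\gamma]{y_1-y_0}{1}=[x_0,x_1]\cap([x_0,x_1]+\lambda(x_1-x_0))=\emptyset$, using only two balls. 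You may want to adopt that shortcut rather than repair the horofunction route.
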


\begin{proof}
For the implication \ref{norm}$\Rightarrow$\ref{straight-line-coproximinality}, see \cite[Lemma~1(d)]{FranchettiFu1972}.
For the converse implication, assume that $\gamma$ is not a norm.
By \cite[4.1]{Soltan2005}, there exists a chord $[x_0,x_1]$ of $\mc[\gamma]{0}{1}$ such that $0\in [x_0,x_1]$ and $[x_0,x_1]$ is no affine diameter of $\mc[\gamma]{0}{1}$, i.e., $\mc[\gamma]{0}{1}$ has no pair of parallel supporting lines passing through $x_0$ and $x_1$, respectively.
Thus there exists another chord $[y_0,y_1]$ of $\mc[\gamma]{0}{1}$ and a number $\lambda>1$ such that $y_1-y_0=\lambda(x_1-x_0)$.

Let $K=\strline{x_0}{x_1}$.
Then $0,y_1-y_0\in K$ and $\gamma(y_1-0)=1=\gamma(y_1-(y_1-y_0))$.
It follows that
\begin{align*}
Q_K(y_1)&=K\cap\bigcap_{z\in K}\mc[\gamma]{z}{\gamma(y_1-z)}\\
&\subset K\cap \lr{\mc[\gamma]{0}{1}\cap\mc[\gamma]{y_1-y_0}{1}}\\
&=(K\cap \mc[\gamma]{0}{1})\cap(K\cap \mc[\gamma]{y_1-y_0}{1})\\
&=[x_0,x_1]\cap ([x_0,x_1]+y_1-y_0)\\
&=\emptyset
\end{align*}
Therefore, the straight line $K$ is not coproximinal.
\end{proof}

Let us give two alternative proofs of the implication \ref{norm}$\Rightarrow$\ref{straight-line-coproximinality} in \cref{result:straight-line-dimension-2}.
\begin{proof}[First proof.]
Let $y\in X$.
Let $u_1$ and $u_2$ be the points of intersection of the straight line $y+(K-K)$ and $\ms[\gamma]{y}{1}$.
The convex body $\mc[\gamma]{y}{1}$ admits a pair of parallel supporting lines $L_1$ at $u_1$ and $L_2$ at $u_2$.
The straight lines $L_1$ and $L_2$ intersect $K$ in points $v_1$ and $v_2$, respectively.
Take $x=\frac{1}{2}(v_1+v_2)$. 

Case 1: $L_1\cap \ms[\gamma]{y}{1}=\setn{u_1}$.
Then \cite[Lemma~2.1.1.1]{Ma2000} implies that the bisector $\bisec_\gamma(x,y)$ is contained in the interior of the convex hull of the union of $K$ and $y+(K-K)$ or, in other words, $\gamma(y-z)>\gamma(x-z)$ for all $z\in K$, see \cref{fig:case-1} for an illustration.

\begin{figure}
\begin{center}
\begin{tikzpicture}[line cap=round,line join=round,>=stealth,x=1.0cm,y=0.5cm]
\clip(-4.4,-3.7) rectangle (5,3.7);
\draw [domain=-4.4:5] plot(\x,{(-2.-1.*\x)/0.34392145520964856});
\draw [domain=-4.:5] plot(\x,{(--2.-1.*\x)/0.34392145520964856});
\draw (-4,1)--(4,1) node[right]{$L_1$};
\draw (-4,-1)--(4,-1) node[right]{$L_2$};
\draw [fill=black] (-2,0) circle (1.5pt) node[left]{$y$};
\draw [fill=black] (2,0) circle (1.5pt) node[right]{$x$};
\draw [fill=black] (-2.3439214552096486,1) circle (1.5pt) node[above right]{$u_1$};
\draw [fill=black] (-1.6560785447903514,-1) circle (1.5pt) node[below left]{$u_2$};
\draw [fill=black] (2.3439214552096486,-1) circle (1.5pt) node[below left]{$v_2$};
\draw [fill=black] (1.6560785447903514,1) circle (1.5pt) node[above right]{$v_1$};
\fill[color=white] (-0.8,3.4) circle(5pt);
\fill[color=white]  (0.8,-3.4) circle(5pt);
\fill (3.5,-3.4) node{$K$};
\end{tikzpicture}
\end{center}
\caption{Notation in Case 1 of an alternative proof of the implication \ref{norm}$\Rightarrow$\ref{straight-line-coproximinality} in \cref{result:straight-line-dimension-2}.\label{fig:case-1}}
\end{figure}
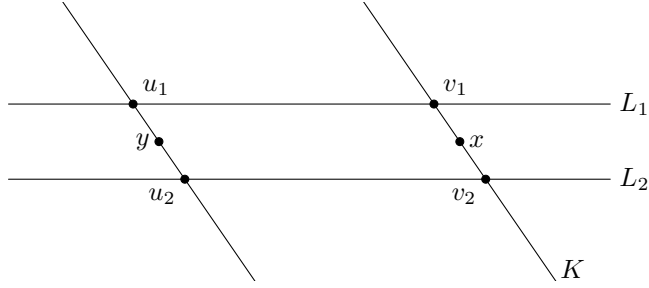

Case 2: $L_1\cap \ms[\gamma]{y}{1}=[w_1,w_2]$.
Assume that there exists $\lambda>0$ such that $w_2-w_1=\lambda(x-y)$.
(Else, change the roles of $w_1$ and $w_2$.)
Let $s_1$ be the intersection point of $\clray{y}{w_2}$ and $\clray{x}{w_1+x-y}$, and $s_2$ be the intersection point of $\clray{y}{2y-w_1}$ and $\clray{x}{2x-(w_2+x-y)}$.
By \cite[Theorem~2.1 and Proposition 2.4]{JahnSp2015}, the bisector $\bisec_\gamma(x,y)$ can be written as a union $A_1\cup A_2\cup A_3$ with $A_1=(\cone{[w_1,w_2]-y}+y)\cap (\cone{[w_1,w_2]-y}+x)$, $A_2=(-\cone{[w_1,w_2]-y}+y)\cap (-\cone{[w_1,w_2]-y}+x)$, and $A_3$ being contained in the convex hull of $\setn{x,y,s_1,s_2}$, see \cref{fig:case-2} for an illustration.
But then, we also have $\gamma(y-z)\geq\gamma(x-z)$ for all $z\in K$.

\begin{figure}
\begin{center}
\begin{tikzpicture}[line cap=round,line join=round,>=stealth,x=1.0cm,y=0.5cm]
\clip(-4.4,-3.7) rectangle (5,3.7);
\draw [domain=-2.0:5] plot(\x,{(--2.--1.*\x)/0.5});
\draw [domain=-4.4:2.0] plot(\x,{(-2.--1.*\x)/-1.});
\draw [domain=-2.0:5] plot(\x,{(-2.-1.*\x)/1.});
\draw [domain=-4.4:2.0] plot(\x,{(--2.-1.*\x)/-0.5});
\draw[line width=1.5pt] (-3,1)--(-1.5,1);
\draw[line width=1.5pt] (-2.5,-1)--(-1,-1);
\draw[line width=1.5pt] (1.5,-1.)--(3,-1);
\draw[line width=1.5pt] (2.5,1)--(1,1);
\draw [domain=-4.4:5] plot(\x,{(-2.-1.*\x)/0.34392145520964856});
\draw [domain=-4.:5] plot(\x,{(--2.-1.*\x)/0.34392145520964856});
\draw (-4,1)--(4,1) node[right]{$L_1$};
\draw (-4,-1)--(4,-1) node[right]{$L_2$};
\draw [fill=black] (-2,0) circle (1.5pt) node[left]{$y$};
\draw [fill=black] (2,0) circle (1.5pt) node[right]{$x$};
\draw [fill=black] (2.5,1) circle (1.5pt);
\draw [fill=black] (1,1) circle (1.5pt);
\draw [fill=black] (1.5,-1) circle (1.5pt);
\draw [fill=black] (3,-1) circle (1.5pt);
\draw [fill=black] (-3,1) circle (1.5pt) node[below]{$w_1$};
\draw [fill=black] (-1.5,1) circle (1.5pt) node[below right]{$w_2$};
\draw [fill=black] (-2.5,-1) circle (1.5pt);
\draw [fill=black] (-1,-1) circle (1.5pt);
\draw [fill=black] (-0.6666666666666666,2.6666666666666665) circle (1.5pt) node[left]{$s_1$};
\draw [fill=black] (0.6666666666666666,-2.6666666666666665) circle (1.5pt) node[left]{$s_2$};
\draw [fill=black] (-2.3439214552096486,1) circle (1.5pt) node[above right]{$u_1$};
\draw [fill=black] (-1.6560785447903514,-1) circle (1.5pt) node[below left]{$u_2$};
\draw [fill=black] (2.3439214552096486,-1) circle (1.5pt) node[below left]{$v_2$};
\draw [fill=black] (1.6560785447903514,1) circle (1.5pt) node[above right]{$v_1$};
\fill[pattern=dots] (-0.6772543773021731,-5.354508754604346) -- (2.8249445219275753,-4.824944521927575) -- (0.6666666666666666,-2.6666666666666665) -- cycle;
\fill[pattern=dots] (-0.6666666666666666,2.6666666666666665) -- (0.5320710146544925,5.064142029308985) -- (-3.209905725590878,5.209905725590878) -- cycle;
\fill[color=white] (-0.8,3.4) circle(5pt);
\fill (-0.8,3.4) node{$A_1$};
\fill[color=white]  (0.8,-3.4) circle(5pt);
\fill (0.8,-3.4) node{$A_2$};
\fill (3.5,-3.4) node{$K$};
\end{tikzpicture}
\end{center}
\caption{Notation in Case 2 of an alternative proof of the implication \ref{norm}$\Rightarrow$\ref{straight-line-coproximinality} in \cref{result:straight-line-dimension-2}\label{fig:case-2}.}
\end{figure}
In either case, $x \in Q_K(y)$.
\end{proof}

\begin{proof}[Second proof.]
Let $K\subset X$ be a straight line and let $y_0\in X$.
Coproximinality is translation invariant, i.e., if a set $K\subset X$ is coproximinal, then so is $K+x$ for every $x\in X$.
Thus there is no loss of generality in assuming that $K$ is a linear subspace of $X$.
Then $K\cap\mc[\gamma]{0}{1}=[-x_0,x_0]$, and there is a linear functional $f:X\to\RR$, $f\neq 0$, such that $\max\setcond{f(x)}{x\in\mc[\gamma]{0}{1}}=f(x_0)\eqdef \alpha>0$ and $\min\setcond{f(x)}{x\in\mc[\gamma]{0}{1}}=f(-x_0)=-\alpha<0$.

For $z\in K$ and $\lambda\geq 0$, we then have $\mc[\gamma]{z}{\lambda}\cap K=f^{-1}([f(z)-\lambda \alpha,f(z)+\lambda \alpha])\cap K$, and $y_0\in\mc[\gamma]{z}{\lambda}$ implies $f(y_0)\in [f(z)-\lambda\alpha,f(z)+\lambda\alpha]$.
Hence,
\begin{align*}
Q_K(y_0)&=K\cap \bigcap_{z\in K,\lambda>0:\, y_0\in\mc[\gamma]{z}{\lambda}}\mc[\gamma]{z}{\lambda}\\
&=\bigcap_{z\in K,\lambda>0:\, y_0\in\mc[\gamma]{z}{\lambda}}\lr{f^{-1}([f(z)-\lambda \alpha,f(z)+\lambda\alpha])\cap K}\\
&\supset \bigcap_{z\in K,\lambda>0:\, y_0\in\mc[\gamma]{z}{\lambda}}\lr{f^{-1}(f(y_0))\cap K}\\
&=f^{-1}(f(y_0))\cap K\\
&\neq \emptyset.
\end{align*}
For the latter inequality, note that the set $f^{-1}(f(y_0))$ is a straight line parallel to supporting lines of $\mc[\gamma]{0}{1}$ at $x_0$ and $-x_0$, so it cannot be disjoint from (i.e., parallel to) $\strline{-x_0}{x_0}=K$.
\end{proof}

The characterization established in \cref{result:straight-line-dimension-2} translates directly to higher dimensions.
\begin{Satz}\label{result:straight-line-coproximinality}
Let $(X,\gamma)$ be a generalized Minkowski space with $\dim(X)\geq 2$.
The following statements are equivalent.
\begin{enumerate}[label={(\roman*)},leftmargin=*,align=left,noitemsep]
\item{The gauge $\gamma$ is a norm.}
\item{Every straight line $K\subset X$ is coproximinal.}
\end{enumerate}
\end{Satz}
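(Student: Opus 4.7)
The plan is to reduce the theorem to the two-dimensional \cref{result:straight-line-dimension-2}. The implication (i)$\Rightarrow$(ii) remains valid in arbitrary dimension by the same reference \cite[Lemma~1(d)]{FranchettiFu1972}, since the defining inequality $\gamma(x-z)\leq\gamma(y-z)$ for $x\in Q_K(y)$ only involves vectors from the (at most) two-dimensional subspace spanned, after translation to the origin, by the line $K$ and the point $y$.

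For the converse (ii)$\Rightarrow$(i), I would argue by contraposition. Assume $\gamma$ is not a norm; then there exists $v\in X\setminus\setn{0}$ with $\gamma(v)\neq\gamma(-v)$. Using $\dim(X)\geq 2$, pick $w\in X$ linearly independent from $v$ and set $Y\defeq\lin{\setn{v,w}}$. The restriction $\gamma|_Y$ is itself a gauge on the two-dimensional space $Y$, and since $v\in Y$ still witnesses the same asymmetry, $\gamma|_Y$ fails to be a norm on $Y$. Applying \cref{result:straight-line-dimension-2} to $(Y,\gamma|_Y)$ furnishes a straight line $K\subset Y$ together with a point $y\in Y$ having no best coapproximation in $K$ with respect to $\gamma|_Y$.

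It then remains to observe that the set of best coapproximations of $y$ in $K$ is the same whether computed in $(X,\gamma)$ or intrinsically in $(Y,\gamma|_Y)$: for any candidate $x\in K\subset Y$ and any $z\in K$, both differences $x-z$ and $y-z$ lie in $Y$, so the inequalities defining $Q_K(y)$ involve only values of $\gamma|_Y$. Hence the emptiness of $Q_K(y)$ transfers from $Y$ to $X$, exhibiting $K$ as a non-coproximinal straight line in $X$ and contradicting (ii). The only mild subtlety is the transfer of the asymmetry to the two-dimensional subspace, which is immediate from the choice $v\in Y$; beyond that the argument is essentially bookkeeping.
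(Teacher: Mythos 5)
Your proposal is correct and follows essentially the same route as the paper: both reduce to \cref{result:straight-line-dimension-2} via the observation that, after translating $K$ to pass through the origin, $Q_K(y)$ depends only on the restriction of $\gamma$ to the two-dimensional subspace $\lin{K\cup\setn{y}}$, so coproximinality and its failure transfer between $(X,\gamma)$ and that subspace. The only cosmetic difference is that you treat the two implications separately (using that a gauge fails to be a norm exactly when some single vector $v$ witnesses $\gamma(v)\neq\gamma(-v)$, which then lives in a suitable plane), whereas the paper phrases the reduction once for the equivalence as a whole.
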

\begin{proof}
We reduce the problem to the $2$-dimensional case and apply \cref{result:straight-line-dimension-2}.
First, by translational invariance of coproximinality, it is sufficient to consider $1$-dimensional linear subspaces instead of arbitrary straight lines.
Second, for the coproximinality of a set $K\subset X$, the points of $K$ are not crucial as we always have $Q_K(y)=\setn{y}$ when $y\in K$.
Third, for a set $K\subset X$ and a point $y\in X$, the set $Q_K(y)$ stays the same when viewed in the generalized Minkowski space $(Y,\gamma\vert_Y)$ instead of $(X,\gamma)$, with $Y\defeq\lin{K\cup\setn{y}}$ and $\gamma\vert_Y$ being the restriction of $\gamma$ to $Y$.
Taking these observations into account, the coproximinality of every straight line $K\subset X$ is equivalent to the coproximinality of every $1$-dimensional linear subspace $K$ of $X$ with respect to every $2$-dimensional linear subspace $(Y,\gamma\vert_Y)$ of $(X,\gamma)$ with $K \subseteq Y$.
\end{proof}

%%%%%%%%%%%%%%%%%%%%%%%%%%%%%%%%%%%%%%%%%%%%%%%%%%%%%%%%%%%%%%%%%%

\section{Some remarks on topology}

In order to give an account on the topological arguments that come with the following characterization of Hilbert spaces, we loosely follow the exposition of \cite{Cobzas2013}.

For a generalized Minkowski space $(X,\gamma)$, we call a set $V\subset X$ a \emph{neighborhood} of $x\in X$, if there exists a number $\lambda>0$ such that $\mo[\gamma]{x}{\lambda}\subset V$.
This way, the family of neighborhoods of a fixed point forms a filter of subsets of $X$. Clearly, another basis of that filter is given by $\mc[\gamma]{x}{\lambda}$, $\lambda > 0$.
Those subsets of $X$ which are neighborhoods of all of their points form the family of all open sets of a topology $\tau_\gamma$ on $X$.
A set $F \subset X$ is closed in that topology if $X \setminus F$ is open or, equivalently, if $F$ is closed under $\tau_\gamma$-limits.
Here the expression $\lim_{n \to \infty} x_n=x_0$ is defined by $\lim_{n \to \infty} \gamma(x_n-x_0)=0$.

Note that $\lim_{n \to \infty} \gamma(x_n-x_0)=0$ does not imply $\lim_{n \to \infty} \gamma(x_0-x_n)=0$ (see \cref{result:example_B_not_closed} below). That is, the gauges $\gamma$ and $\gamma^\vee$ may give rise to different concepts of convergence. In fact, two gauges generate the same topology or, equivalently, the same concepts of convergence if and only if they are equivalent in the following sense.
\begin{Prop}
%\begin{Lem}
\label{result:coinciding-topology-gauge-equivalence}
Let $(X,\gamma_1)$ and $(X,\gamma_2)$ be two generalized Minkowski spaces over the same vector space $X$.
The following statements are equivalent.
\begin{enumerate}[label={(\roman*)},leftmargin=*,align=left,noitemsep]
\item{The gauges $\gamma_1$ and $\gamma_2$ generate the same topology, i.e., we have $\tau_{\gamma_1}=\tau_{\gamma_2}$.\label{coinciding-topology}}
\item{For all $(x_n)_{n=0}^\infty \subset X$, $\lim_{n \to \infty} x_n=x_0$ w.r.t.\ $\gamma_1$ if and only if $\lim_{n \to \infty} x_n=x_0$ w.r.t.\ $\gamma_2$.\label{convergence-equivalence}}
\item{The gauges $\gamma_1$ and $\gamma_2$ are \emph{equivalent}, i.e., there exist numbers $c_0,c_1>0$ such that $c_0\gamma_1(x)\leq\gamma_2(x)\leq c_1\gamma_1(x)$ for all $x\in X$.\label{gauge-equivalence}}
\end{enumerate}
%\end{Lem}
\end{Prop}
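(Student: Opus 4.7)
The plan is to run the cycle (iii) $\Rightarrow$ (i) $\Rightarrow$ (ii) $\Rightarrow$ (iii). The first two implications reduce to bookkeeping on neighborhood bases, while (ii) $\Rightarrow$ (iii) is the substantive step.

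For (iii) $\Rightarrow$ (i), the inequalities $c_0\gamma_1\leq\gamma_2\leq c_1\gamma_1$ yield the inclusions $\mo[\gamma_1]{x}{\lambda/c_1}\subset \mo[\gamma_2]{x}{\lambda}$ and $\mo[\gamma_2]{x}{c_0\lambda}\subset \mo[\gamma_1]{x}{\lambda}$ for every $x\in X$ and $\lambda>0$. Hence the families of $\gamma_1$-open balls and $\gamma_2$-open balls at $x$ are cofinal in each other, producing a single neighborhood filter at each $x$, which forces $\tau_{\gamma_1}=\tau_{\gamma_2}$.

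For (i) $\Rightarrow$ (ii), I first verify by a direct triangle-inequality check that every ball $\mo[\gamma]{x}{\lambda}$ is $\tau_\gamma$-open, so the open balls do form a neighborhood basis at each point. From $\tau_{\gamma_1}=\tau_{\gamma_2}$ one then extracts, for each $\lambda>0$, some $\mu>0$ with $\mo[\gamma_2]{x_0}{\mu}\subset \mo[\gamma_1]{x_0}{\lambda}$ and vice versa, which translates sequence convergence w.r.t.\ one gauge directly into convergence w.r.t.\ the other.

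The main step (ii) $\Rightarrow$ (iii) proceeds by contradiction using a normalization trick. If $\gamma_2\leq c_1\gamma_1$ fails for every $c_1>0$, then for each $n\in\NN$ there exists $x_n\in X$ with $\gamma_2(x_n)>n\gamma_1(x_n)$; since a gauge vanishes only at $0$, necessarily $x_n\neq 0$ and $\gamma_2(x_n)>0$. Setting $y_n\defeq x_n/\gamma_2(x_n)$ and invoking positive homogeneity yields $\gamma_2(y_n)=1$ and $\gamma_1(y_n)<1/n$, so $y_n\to 0$ with respect to $\gamma_1$ but $y_n\not\to 0$ with respect to $\gamma_2$, contradicting (ii). Swapping the roles of $\gamma_1$ and $\gamma_2$ supplies the other constant $c_0>0$ with $c_0\gamma_1\leq\gamma_2$.

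The only potential obstacle is cosmetic: the asymmetry $\gamma(-x)\neq\gamma(x)$ could threaten the rescaling step, but the normalization uses only the positive scalar $1/\gamma_2(x_n)$ and the convergence $y_n\to 0$ refers solely to $\gamma(y_n)=\gamma(y_n-0)$, so positive homogeneity alone suffices and the asymmetry of the gauges plays no role.
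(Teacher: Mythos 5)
Your proof is correct, but it closes the three-way equivalence in the opposite direction from the paper: you run \ref{gauge-equivalence}$\Rightarrow$\ref{coinciding-topology}$\Rightarrow$\ref{convergence-equivalence}$\Rightarrow$\ref{gauge-equivalence}, whereas the paper runs \ref{coinciding-topology}$\Rightarrow$\ref{gauge-equivalence}$\Rightarrow$\ref{convergence-equivalence}$\Rightarrow$\ref{coinciding-topology}. In both arguments the real content is the same mechanism --- positive homogeneity upgrades a statement about small balls at the origin to a global linear inequality between the gauges --- but it is packaged differently. The paper extracts $c_0$ directly from \ref{coinciding-topology}: since $\mc[\gamma_1]{0}{1}$ is a $\tau_{\gamma_2}$-neighborhood of $0$, some $\mc[\gamma_2]{0}{\lambda}$ sits inside it, and rescaling $x$ to $\frac{\lambda}{\gamma_2(x)}x$ yields $\lambda\gamma_1(x)\leq\gamma_2(x)$. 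Your normalized sequence $y_n=x_n/\gamma_2(x_n)$ with $\gamma_2(y_n)=1$ and $\gamma_1(y_n)<1/n$ is exactly the contrapositive of that rescaling, launched from \ref{convergence-equivalence} instead of \ref{coinciding-topology}. What your route buys: the paper settles \ref{convergence-equivalence}$\Rightarrow$\ref{coinciding-topology} by appealing to the fact that closed sets are detected by sequential limits (which tacitly rests on the first countability of $\tau_\gamma$ set up earlier in that section), whereas your direct proof of \ref{coinciding-topology}$\Rightarrow$\ref{convergence-equivalence}, preceded by the explicit triangle-inequality check that the balls $\mo[\gamma]{x}{\lambda}$ are $\tau_\gamma$-open and hence form a genuine neighborhood basis, makes that dependence visible and self-contained. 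Your closing observation that the normalization only ever multiplies by positive scalars, so the asymmetry $\gamma(-x)\neq\gamma(x)$ is irrelevant, is correct and worth retaining.
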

\begin{proof}
First assume that \ref{coinciding-topology} holds true.
As $\mc[\gamma_1]{0}{1}$ is a $\tau_{\gamma_1}$-neighborhood of $0$, it is also a $\tau_{\gamma_2}$-neighborhood of $0$, so there exists a number $\lambda>0$ such that $\mc[\gamma_2]{0}{\lambda}\subset\mc[\gamma_1]{0}{1}$.
Thus, for $x\neq 0$, we have $\frac{\lambda}{\gamma_2(x)}x\in\mc[\gamma_2]{0}{\lambda}\subset\mc[\gamma_1]{0}{1}$, i.e., $\lambda\gamma_1(x)\leq \gamma_2(x)$.
This means we can take $c_0=\lambda$ in \ref{gauge-equivalence}.
Interchanging the roles of $\gamma_1$ and $\gamma_2$ gives a similar expression for $c_1$ in \ref{gauge-equivalence}.

For \ref{gauge-equivalence}$\Rightarrow$\ref{convergence-equivalence}, simply note that \ref{gauge-equivalence} yields the equivalence of $\lim_{n \to \infty} \gamma_1(x_n-x_0)=0$ with $\lim_{n \to \infty} \gamma_2(x_n-x_0)=0$.

The implication \ref{convergence-equivalence}$\Rightarrow$\ref{coinciding-topology} is obvious, since closed sets are characterized by closedness under limits of sequences.
\end{proof}

The following result gives a characterization when the topology generated by a gauge can be generated by a norm and when it is a vector space topology, see also \cite{BachirFl2020} for a related discussion in a broader setting.
%\begin{Satz}
\begin{Prop}
\label{result:topology-equivalent-gauges}
Let $(X,\gamma)$ be a generalized Minkowski space.
The following statements are equivalent.
\begin{enumerate}[label={(\roman*)},leftmargin=*,align=left,noitemsep]
\item{The topology $\tau_\gamma$ is a vector space topology, i.e., the mappings $X\times X\to X$, $(x,y)\mapsto x+y$ and $\RR\times X\to X$, $(\lambda,x)\mapsto \lambda x$ are continuous w.r.t.\ the product topologies.\label{vector-space-topology}}
\item{The mapping $X\to X$, $x\mapsto -x$, is continuous at $0$ w.r.t.\ $\tau_\gamma$.\label{negative-continuity}}
\item{The gauges $\gamma$ and $\gamma^\vee$ are equivalent.\label{opposite-equivalence}}
\item{The gauge $\gamma$ and the norm $\mnorm{x}_\gamma=\max\setn{\gamma,\gamma^\vee}$ are equivalent.\label{max-equivalence}}
\item{There exists a norm $\mnorm{\cdot}:X\to\RR$ such that $\gamma$ and $\mnorm{\cdot}$ are equivalent.\label{norm-equivalence}}
\item{There exists a norm $\mnorm{\cdot}:X\to\RR$ such that $\tau_\gamma=\tau_{\mnorm{\cdot}}$.\label{norm-topology}}
\end{enumerate}
\end{Prop}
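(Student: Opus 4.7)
The plan is to establish all six equivalences by running a single cycle \ref{vector-space-topology}$\Rightarrow$\ref{negative-continuity}$\Rightarrow$\ref{opposite-equivalence}$\Rightarrow$\ref{max-equivalence}$\Rightarrow$\ref{norm-equivalence}$\Rightarrow$\ref{norm-topology}$\Rightarrow$\ref{vector-space-topology}, because this lets each implication rest on one self-contained argument rather than proving six pairwise equivalences.

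The only step with real content, which I expect to be the main obstacle, is \ref{negative-continuity}$\Rightarrow$\ref{opposite-equivalence}. Continuity of $x\mapsto -x$ at $0$ furnishes some $\delta>0$ such that $\gamma(x)<\delta$ forces $\gamma^\vee(x)=\gamma(-x)<1$. Positive homogeneity then upgrades this local one-off condition to the global inequality $\gamma^\vee(x)\leq\delta^{-1}\gamma(x)$ valid for all $x\in X$. The crucial trick is that the reverse estimate $\gamma(x)\leq\delta^{-1}\gamma^\vee(x)$ comes for free: substitute $-x$ for $x$ in the first inequality and use $(\gamma^\vee)^\vee=\gamma$. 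Thus one-sided continuity of negation at a single point automatically produces two-sided equivalence of $\gamma$ and $\gamma^\vee$ in the sense of \cref{result:coinciding-topology-gauge-equivalence}\ref{gauge-equivalence}.

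The remaining implications are routine. \ref{vector-space-topology}$\Rightarrow$\ref{negative-continuity} is immediate because $x\mapsto -x$ is the restriction of scalar multiplication to $\setn{-1}\times X$. For \ref{opposite-equivalence}$\Rightarrow$\ref{max-equivalence} I would check that $\mnorm{\cdot}_\gamma$ is a norm---subadditivity and positive homogeneity follow pointwise from those of $\gamma$ and $\gamma^\vee$, while symmetry $\mnorm{-x}_\gamma=\mnorm{x}_\gamma$ holds because negation merely swaps the two arguments of the maximum---and then observe $\gamma\leq\mnorm{\cdot}_\gamma\leq\max\setn{1,c}\gamma$ for the constant $c$ supplied by \ref{opposite-equivalence}. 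Finally, \ref{max-equivalence}$\Rightarrow$\ref{norm-equivalence} is trivial, \ref{norm-equivalence}$\Rightarrow$\ref{norm-topology} is covered by \cref{result:coinciding-topology-gauge-equivalence}, and \ref{norm-topology}$\Rightarrow$\ref{vector-space-topology} is the classical fact that norm topologies are vector space topologies, closing the cycle.
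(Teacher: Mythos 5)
Your proposal is correct and follows essentially the same route as the paper: the identical cycle \ref{vector-space-topology}$\Rightarrow$\ref{negative-continuity}$\Rightarrow$\ref{opposite-equivalence}$\Rightarrow$\ref{max-equivalence}$\Rightarrow$\ref{norm-equivalence}$\Rightarrow$\ref{norm-topology}$\Rightarrow$\ref{vector-space-topology}, with the same key step \ref{negative-continuity}$\Rightarrow$\ref{opposite-equivalence} (the paper phrases it as the ball inclusion $-\mc[\gamma]{0}{\lambda}\subset\mc[\gamma]{0}{1}$ and its negation, which is exactly your \enquote{reverse estimate for free} via $(\gamma^\vee)^\vee=\gamma$). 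Your additional verification that $\mnorm{\cdot}_\gamma$ is indeed a norm is a small bonus the paper leaves implicit.
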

%\end{Satz}

\begin{proof}
The implication \ref{vector-space-topology}$\Rightarrow$\ref{negative-continuity} is trivial.
For \ref{negative-continuity}$\Rightarrow$\ref{opposite-equivalence}, according to the assumption there exists $\lambda > 0$ such that $\mc[\gamma]{0}{\lambda}$ is mapped into the neighborhood $\mc[\gamma]{0}{1}$ of $-0=0$. That is, $-\mc[\gamma]{0}{\lambda} \subset \mc[\gamma]{0}{1}$, whence also $\mc[\gamma]{0}{\lambda} \subset -\mc[\gamma]{0}{1}$. These give $\mc[\gamma^\vee]{0}{\lambda} \subset \mc[\gamma]{0}{1}$, $\mc[\gamma]{0}{\lambda} \subset \mc[\gamma^\vee]{0}{1}$ and in turn $\lambda\gamma(x) \leq \gamma^\vee(x) \leq \frac{1}{\lambda} \gamma(x)$.

Now let us show \ref{opposite-equivalence}$\Rightarrow$\ref{max-equivalence}.
By assumption, there exist numbers $c_0,c_1>0$ such that $c_0\gamma(x)\leq\gamma^\vee(x)\leq c_1\gamma(x)$.
But then also 
\begin{equation*}
\gamma(x)\leq\max\setn{\gamma(x),\gamma^\vee(x)}\leq\max\setn{\gamma(x),c_1\gamma(x)}=\max\setn{1,c_1}\gamma(x)
\end{equation*}
for arbitrary $x\in X$.

The implication \ref{max-equivalence}$\Rightarrow$\ref{norm-equivalence} is again trivial, \ref{norm-equivalence}$\Rightarrow$\ref{norm-topology} follows from \cref{result:coinciding-topology-gauge-equivalence}, and \ref{norm-topology}$\Rightarrow$\ref{vector-space-topology} is trivial, too.
\end{proof}

As a consequence of \ref{opposite-equivalence}, balls $\mc[\gamma]{x}{\lambda}$ with $x\in X$ and $\lambda>0$ are closed in $\tau_\gamma$.
Indeed, for $n \in \mathbb{N}$, let $x_n \in \mc[\gamma]{x}{\lambda}$ and assume that there exists $x_0 \in X$ such that $\lim_{n \to \infty} x_n=x_0$ w.r.t.\ $\gamma$.
Then, by \ref{opposite-equivalence}, also $\lim_{n \to \infty} x_n=x_0$ w.r.t.\ $\gamma^\vee$ and $\gamma(x_0-x) \leq \gamma(x_0-x_n)+\gamma(x_n-x) \leq \gamma^\vee(x_n-x_0)+\lambda \stackrel{n \to \infty}{\longrightarrow} \lambda$. Hence $x_0 \in \mc[\gamma]{x}{\lambda}$.

Note that for general gauges $\gamma$, $\mc[\gamma]{0}{1}$ need not be closed in $\tau_\gamma$.
This is stated in \cite[Proposition~1.1.8.1]{Cobzas2013}, but there the concept of a gauge is more flexible, since it requires $x=0$ if and only if $\gamma(x)=0$ and $\gamma(-x)=0$, whereas we suppose $x=0$ if and only if $\gamma(x)=0$.
Therefore we add the following example that meets our stronger concept of a gauge.

\begin{Bsp}
\label{result:example_B_not_closed}
Let the space $X=\ell_1$ of all absolutely convergent sequences of reals be equipped with the gauge $\gamma(\xi_1,\xi_2,\ldots)=\max\setn{\sup_{i \geq 1}|\xi_i|,\sum_{i=1}^\infty \xi_i}$. Then the sequence $(x_n)_{n=1}^\infty \subset \ell_1$ defined by $x_n=\bigg(\underbrace{\frac{1}{n},\ldots,\frac{1}{n}}_{n \text{ times}},0,0,\ldots\bigg)$ satisfies $\gamma(x_n) \equiv 1$ and $\lim_{n \to \infty} (-x_n)=0$. Moreover, $\mc[\gamma]{0}{1}$ is not closed in $\tau_\gamma$.
\end{Bsp}

\begin{proof}
The claims $\gamma(x_n) = 1$ and $\lim_{n \to \infty} (-x_n)=0$ are obvious.

To see that $\mc[\gamma]{0}{1}$ is not closed, we consider $x_0=\left(1,\frac{1}{2},\frac{1}{4},\frac{1}{8},\ldots\right) \in \ell_1$. Then $\gamma(x_0)=2$, whence $x_0 \notin \mc[\gamma]{0}{1}$. 
The sequence $(x_0-x_n)_{n=1}^\infty \subset \ell_1$ satisfies $(x_0-x_n)_{n=1}^\infty \subset \mc[\gamma]{0}{1}$, because
\begin{align*}
\gamma(x_0-x_n)&=\max\setn{\sup\setn{\abs{1-\frac{1}{n}},\abs{\frac{1}{2}-\frac{1}{n}},\ldots,\abs{\frac{1}{2^{n-1}}-\frac{1}{n}},\frac{1}{2^n}}, 2-1} = 1,
\end{align*}
and $\lim_{n \to \infty} (x_0-x_n)=x_0$, since $\lim_{n \to \infty} \gamma((x_0-x_n)-x_0)=\lim_{n \to \infty} \gamma(-x_n)=0$. Having found a sequence $(x_0-x_n)_{n=1}^\infty \subset \mc[\gamma]{0}{1}$ such that $\lim_{n \to \infty} (x_0-x_n)=x_0 \notin \mc[\gamma]{0}{1}$, we see that $\mc[\gamma]{0}{1}$ is not closed. 
\end{proof}

Taking into account that a gauge can behave strangely compared with a norm, we stress here that the local topological behaviors of gauges and norms agree in the following sense.

\begin{Lem}
\label{result:finite_dimensional}
Let $(X,\gamma)$ be a generalized Minkowski space and let $X_0$ be a finite-dimensional linear subspace of $X$.
\begin{enumerate}[label={(\roman*)},leftmargin=*,align=left,noitemsep]
\item{The topology on $X_0$ generated by $\gamma$ agrees with the canonical (Euclidean) vector space topology on $X_0$.\label{Euclidean_topology}}
\item{The subspace $X_0$ is closed in $\tau_\gamma$.\label{closed_subspaces}}
\end{enumerate}
\end{Lem}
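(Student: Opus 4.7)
The plan for part (i) is to fix a basis $e_1,\ldots,e_n$ of $X_0$, write $\mnorm{\cdot}$ for the Euclidean norm on $X_0$ with respect to this basis, and show that the restriction $\gamma|_{X_0}$ is equivalent to $\mnorm{\cdot}$ as gauges (observing that every norm is a gauge). Once two-sided estimates $c\mnorm{x} \leq \gamma(x) \leq C\mnorm{x}$ on $X_0$ are secured, \cref{result:coinciding-topology-gauge-equivalence} delivers the equality of the induced topologies.

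The upper bound follows routinely from subadditivity and positive homogeneity: for $x = \sum_{i=1}^n \xi_i e_i$, each summand $\gamma(\xi_i e_i)$ is bounded by $\abs{\xi_i}\max\setn{\gamma(e_i),\gamma(-e_i)}$, and $\sum_i\abs{\xi_i}$ is controlled by a multiple of $\mnorm{x}$ via Cauchy--Schwarz. For the lower bound I would first combine the upper bound with the reverse-triangle-type estimates $\gamma(x)-\gamma(y) \leq \gamma(x-y)$ and $\gamma(y)-\gamma(x)\leq\gamma(y-x)$ to conclude that $\gamma$ is Lipschitz, hence continuous, with respect to the Euclidean topology on $X_0$. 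The Euclidean unit sphere in $X_0$ is compact by Heine--Borel, so $\gamma$ attains its infimum $c$ there; this infimum is strictly positive since $\gamma$ vanishes only at the origin. Positive homogeneity then yields $\gamma(x)\geq c\mnorm{x}$ throughout $X_0$.

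For part (ii) I would argue by contradiction. Suppose $(x_k)_{k=1}^\infty \subset X_0$ converges with respect to $\tau_\gamma$ to some $x_0\in X\setminus X_0$, and consider the finite-dimensional subspace $X_1\defeq\lin{X_0\cup\setn{x_0}}$. By part (i) applied to $(X_1,\gamma|_{X_1})$, the $\gamma$-topology on $X_1$ coincides with its Euclidean topology; since $X_0$ is a proper linear subspace of the finite-dimensional $X_1$, it is Euclidean-closed in $X_1$ and therefore also closed in the subspace topology inherited from $\tau_\gamma$. However, $\gamma(x_k-x_0)\to 0$ holds in $X$ exactly when it holds in $X_1$, so $x_k\to x_0$ in the $\tau_\gamma$-subspace topology on $X_1$, forcing $x_0\in X_0$, a contradiction.

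The main obstacle, as expected, is the lower bound in part (i): because $\gamma$ need not be symmetric, the classical proof of equivalence of all norms on $\RR^n$ does not apply verbatim, and indeed \cref{result:example_B_not_closed} shows that such a lower bound can genuinely fail in the infinite-dimensional setting. The finite-dimensional argument above nonetheless succeeds because the upper estimate already makes $\gamma$ Euclidean-Lipschitz on $X_0$, which in turn lets compactness of the Euclidean sphere produce a strictly positive infimum.
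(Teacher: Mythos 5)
Your proposal is correct. For part \ref{closed_subspaces} your argument is essentially identical to the paper's: pass to $X_1=\lin{X_0\cup\setn{x_0}}$, invoke part \ref{Euclidean_topology} there, and use Euclidean closedness of $X_0$ in $X_1$ (the paper phrases it directly rather than by contradiction, but that is cosmetic). For part \ref{Euclidean_topology} the routes genuinely differ: the paper simply cites Theorem~9 of Garc\'{\i}a-Raffi's paper on compactness and finite dimension in asymmetric normed spaces, whereas you give a self-contained proof by establishing the two-sided estimate $c\mnorm{x}\leq\gamma(x)\leq C\mnorm{x}$ on $X_0$ and then appealing to \cref{result:coinciding-topology-gauge-equivalence}. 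Your argument is the standard equivalence-of-norms proof correctly adapted to the asymmetric setting: the upper bound needs $\max\setn{\gamma(e_i),\gamma(-e_i)}$ because of the lack of symmetry, the upper bound alone already yields that $\gamma$ is Euclidean-Lipschitz on $X_0$ (using both $\gamma(x)-\gamma(y)\leq\gamma(x-y)$ and its reverse), and compactness of the Euclidean unit sphere together with the fact that $\gamma$ vanishes only at $0$ gives the strictly positive lower constant. What the citation buys the paper is brevity; what your version buys is a transparent, elementary argument that makes visible exactly where finite-dimensionality is used and why the phenomenon of \cref{result:example_B_not_closed} cannot occur here. Both are valid proofs of the statement.
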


\begin{proof}
For \ref{Euclidean_topology}, see \cite[Theorem~9]{GarciaRaffi2005}.
For \ref{closed_subspaces}, consider a sequence $(x_n)_{n=1}^\infty \subseteq X_0$ such that $\lim_{n \to \infty} x_n=x_0 \in X$. Application of \ref{Euclidean_topology} to the finite-dimensional subspace $X_1=\lin{X_0 \cup\setn{x_0}}$ shows that $x_0 \in X_0$. So $X_0$ is closed.
\end{proof}

Claim \ref{Euclidean_topology} and \cref{result:coinciding-topology-gauge-equivalence} imply in particular that all gauges on a finite-dimensional linear space are equivalent and give rise to the same concepts of boundedness and convergence. 

%%%%%%%%%%%%%%%%%%%%%%%%%%%%%%%%%%%%%%%%%%%%%%%%%%%%%%%%%%%%

\section{Characterizing Hilbert spaces in terms of coproximinality}

Here we want to show the following characterization of Hilbert spaces.

\begin{Satz}\label{result:characterization-hilbert}
Let $(X,\gamma)$ be a generalized Minkowski space of dimension $\dim(X)\geq 3$. 
The following statements are equivalent.
\begin{enumerate}[label={(\roman*)},leftmargin=*,align=left,noitemsep]
\item{The generalized Minkowski space $(X,\gamma)$ is a Hilbert space, i.e., $\gamma$ is a norm induced by an inner product and $(X,\gamma)$ is complete w.r.t.\ that norm.\label{Hilbert-original}}
\item{Every closed $1$-codimensional linear subspace $K$ of $X$ is coproximinal.\label{Hilbert-coproximinal}}
\end{enumerate}
\end{Satz}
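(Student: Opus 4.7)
The plan is as follows. The forward implication $(\mathrm{i}) \Rightarrow (\mathrm{ii})$ is classical and I would invoke \cite{FranchettiFu1972}: in a Hilbert space, the orthogonal projection onto a closed linear subspace is a best coapproximation, so in particular every closed $1$-codimensional subspace is coproximinal.

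For $(\mathrm{ii}) \Rightarrow (\mathrm{i})$, three steps are needed. \emph{Step~1 (the gauge is a norm).} The goal is to show that every straight line $L \subset X$ is coproximinal, so that \cref{result:straight-line-coproximinality} forces $\gamma$ to be a norm. For each such $L$ and each $y \in X$, I would lift $L$ to a closed $1$-codimensional subspace $K \supseteq L$, constructed as the kernel of a continuous linear functional obtained via Hahn--Banach applied to the sublinear $\gamma$, and then use the hypothesis $Q_K(y) \neq \emptyset$. The content lies in showing that a best coapproximation in $K$ can be moved into $L$, presumably by invoking the dimension-reduction principle announced in the abstract inside a finite-dimensional subspace of $X$ containing $L \cup \setn{y}$ (which is closed in $\tau_\gamma$ by \cref{result:finite_dimensional}), thereby reducing the question to \cref{result:straight-line-dimension-2}.

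\emph{Step~2 (the norm is Hilbertian).} With $\gamma$ now a norm, I would pass to an arbitrary $3$-dimensional subspace $X_0 \subseteq X$, which by \cref{result:finite_dimensional} inherits the Euclidean topology and is in particular a $3$-dimensional Banach space. The hypothesis on $X$, together with the argument of Step~1, should produce coproximinality of every $2$-dimensional subspace of $X_0$ inside $X_0$, and the classical Franchetti--Furi theorem \cite[Theorem~1]{FranchettiFu1972} applied to $X_0$ then forces $(X_0,\gamma|_{X_0})$ to be Euclidean. The parallelogram law then propagates from every $2$-dimensional subspace to $X$, producing an inner product that induces $\gamma$.

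\emph{Step~3 (completeness).} Assume for contradiction that $X$ is not complete with respect to the inner-product norm, and fix a vector $v$ in its completion $\widehat{X}$ lying outside $X$. The functional $f = \skpr{\cdot}{v}$ restricted to $X$ is continuous and linear, and its kernel $K$ is a closed $1$-codimensional subspace of $X$. The orthogonal complement of $K$ inside $\widehat{X}$ is spanned by $v$ and hence meets $X$ only in $\setn{0}$, so $K$ admits no nonzero Birkhoff-normal direction in $X$, is not coproximinal, and contradicts~(ii). The main obstacle I expect is Step~1: the lifted $K$ has infinite codimension over $L$, and a best coapproximation of $y$ in $K$ need not lie in $L$ (or even in any prescribed finite-dimensional subspace) a priori; the natural remedy via a continuous linear projection $\pi\colon X \to \lin{L \cup \setn{y}}$ founders on the fact that such $\pi$ is generically not a $\gamma$-contraction. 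Resolving this should hinge on the dimension-reduction principle announced in the abstract, applied inductively inside a suitable finite-dimensional subspace of $X$.
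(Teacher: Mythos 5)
Your forward implication and the overall architecture of Steps 2 and 3 are sound. Step 3 in particular is a legitimate alternative to the paper's route: the paper proves instead that coproximinality of closed $1$-codimensional subspaces implies the Riesz representation property, which is equivalent to completeness (\cref{result:necessity-complete}), whereas you directly exhibit a non-coproximinal closed hyperplane $K=X\cap v^{\perp}$ from a point $v\in\widehat{X}\setminus X$. That works: $K$ is dense in the hyperplane $v^{\perp}$ of $\widehat{X}$, so any $z_0\in Q_K(y_0)$ would force $y_0-z_0\in(v^{\perp})^{\perp}\cap X=\RR v\cap X=\setn{0}$, contradicting $y_0\notin K$. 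You do need to state and prove the characterization $z_0\in Q_K(y_0)\Leftrightarrow y_0-z_0\perp K$ on which this rests (\cref{result:projection} in the paper); both approaches use it.

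The genuine gap is Step 1, and you have diagnosed it yourself without closing it: nothing in your proposal gets you from coproximinality of closed $1$-codimensional subspaces down to coproximinality of lines (or of $2$-planes, which Step 2 also requires before Franchetti--Furi can be applied to a $3$-dimensional subspace). Trying to \enquote{move} a best coapproximation $z_0\in Q_K(y)$ into $L\subset K$ is indeed hopeless: $z_0$ satisfies $\gamma(z_0-z)\leq\gamma(y-z)$ for all $z\in K$, but no projection of $z_0$ into $L$ preserves these inequalities. The paper's resolution (\cref{result:intermediate-subspaces}) runs in the opposite direction, by contraposition: if a finite-dimensional subspace $X_0$ is not coproximinal at $y_0$, the convex candidate set $C=\bigcap_{z\in X_0}\mc[\gamma]{z}{\gamma(y_0-z)}$ is disjoint from $X_0$; local compactness of $X_0$ shows that even the open convex set $G(n_0)=\bigcup_{x\in C}\mo[\gamma]{x}{1/n_0}$ is disjoint from $X_0$ for some $n_0$; a separation theorem then yields a continuous linear functional $h$ with $h<0$ on $G(n_0)$ and $h=0$ on $X_0$, and the closed hyperplane $H=h^{-1}(0)\supset X_0$ satisfies $Q_H(y_0)\subset H\cap C\subset H\cap G(n_0)=\emptyset$. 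Thus the \emph{failure} of coproximinality is pushed up from $X_0$ to a suitably chosen closed hyperplane, rather than a best coapproximation being pushed down; the hyperplane must be chosen to separate $C$ from $X_0$, not merely to contain $X_0$. Without this (or an equivalent) argument, Steps 1 and 2 do not go through.
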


We recall that the above is established in \cite[Theorem 1]{FranchettiFu1972} under the additional assumptions that $\gamma$ is a norm and that the normed space $(X,\gamma)$ is complete.

%%%%%%%%%%%%%%

\subsection{The necessity of an inner product structure}

The coproximinality of all linear subspaces of a fixed dimension implies the coproximinality of all linear subspaces of lower finite dimension.
\begin{Prop}\label{result:intermediate-subspaces}
Let $(X,\gamma)$ be a generalized Minkowski space.
If there exists a finite-dimensional linear subspace $X_0\subset X$ which is not coproximinal, then there also exists a closed $1$-codimensional linear subspace $H\subset X$ with $X_0\subset H$ such that $H$ as well as all subspaces $X_1$ with $X_0\subset X_1\subset H$ are not coproximinal.
\end{Prop}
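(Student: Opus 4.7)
I would pick a witness $y_0 \in X\setminus X_0$ with $Q_{X_0}(y_0)=\emptyset$, and introduce the set
\[
B := \{y \in X : Q_{X_0}(y) = \emptyset\},
\]
which contains $y_0$, is disjoint from $X_0$ (because every $y\in X_0$ satisfies $y\in Q_{X_0}(y)$), is invariant under translation by elements of $X_0$, and is invariant under multiplication by positive scalars. The plan is to exhibit a $\tau_\gamma$-closed $1$-codimensional subspace $H\supseteq X_0$ with $y_0\notin H$ and $H\cap B=\emptyset$; combined with the observation below, this will yield the proposition.

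The key observation is that for any linear subspace $X_1\supseteq X_0$ of $X$ one has $Q_{X_1}(y_0)\subseteq X_1\cap B$. Indeed, given $x_1\in Q_{X_1}(y_0)$, if some $x_0$ lay in $Q_{X_0}(x_1)$, then
\[
\gamma(x_0-z) \leq \gamma(x_1-z) \leq \gamma(y_0-z) \qquad \text{for every } z\in X_0\subseteq X_1,
\]
yielding $x_0\in Q_{X_0}(y_0)=\emptyset$, a contradiction. So $Q_{X_0}(x_1)=\emptyset$, i.e.\ $x_1\in B$. Once $H$ as above is in hand, every $X_1$ with $X_0\subseteq X_1\subseteq H$ satisfies $Q_{X_1}(y_0)\subseteq H\cap B=\emptyset$; together with $y_0\notin X_1$ (ensured by $y_0\notin H$), this shows that $y_0$ witnesses the non-coproximinality of $X_1$, and in particular of $H$ itself.

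To construct $H$, I would apply a Hahn--Banach argument. Since $X_0$ is $\tau_\gamma$-closed by \cref{result:finite_dimensional} and $y_0\notin X_0$, the infimum $d := \inf_{x\in X_0} \gamma(y_0+x)$ is strictly positive. The functional $x+\alpha y_0 \mapsto \alpha d$ (for $x\in X_0$, $\alpha\in\RR$) on $Y := \lin{X_0\cup\{y_0\}}$ is linear and is dominated by the sublinear $\gamma$ (trivially for $\alpha\leq 0$, and by the choice of $d$ for $\alpha>0$); Hahn--Banach extends it to a linear $f:X\to\RR$ with $f\leq\gamma$ on $X$. Then $H := \ker f$ is a $1$-codimensional subspace containing $X_0$ and omitting $y_0$.

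The main obstacle is the twofold verification that (i) $H$ is $\tau_\gamma$-closed and (ii) $H\cap B=\emptyset$. For (i), one needs the $\gamma$-continuity of $f$, which is delicate given only the one-sided bound $f\leq\gamma$ when $\gamma$ and $\gamma^\vee$ are inequivalent (cf.\ Section~4). For (ii), since $B/X_0$ is a positively scale-invariant (and in general non-convex) cone in the quotient $X/X_0$ containing the ray through $y_0+X_0$ on which $f$ is positive, the task reduces to confirming that $f>0$ on all of $B$, which requires exploiting the interplay between the coapproximation inequality defining $B$ and the Hahn--Banach extension.
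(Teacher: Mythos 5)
Your reduction step is sound: the observation that $Q_{X_1}(y_0)\subseteq X_1\cap B$ for every linear subspace $X_1\supseteq X_0$ is correct, and it is essentially the paper's inclusion $Q_{X_1}(y_0)\subseteq\bigcap_{z\in X_0}\mc[\gamma]{z}{\gamma(y_0-z)}\eqdef C$, except that the paper keeps the smaller, \emph{convex} set $C\subseteq B$ rather than passing to $B$. The proof breaks down at the construction of $H$, and the two obstacles you flag at the end are genuine gaps rather than routine verifications. First, a Hahn--Banach extension $f$ satisfying only the one-sided bound $f\leq\gamma$ need not be $\tau_\gamma$-continuous: one gets that every set $\setcond{x\in X}{f(x)\geq c}$ is $\tau_\gamma$-closed, but not that $\setcond{x\in X}{f(x)\leq c}$ is, so $\ker f$ need not be closed when $\gamma$ and $\gamma^\vee$ are inequivalent. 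Second, and more fundamentally, your $f$ is built solely from the number $d=\inf_{z\in X_0}\gamma(y_0-z)$ and carries no information about the coapproximation structure: for $x\in C$ one only gets $f(x)=f(x-z)\leq\gamma(x-z)\leq\gamma(y_0-z)$, i.e.\ $f\leq d$ on $C$, with no lower bound whatsoever. So there is no reason for $\ker f$ to miss even the convex set $C$, let alone the larger, positively homogeneous but in general non-convex set $B$; a hyperplane through $X_0$ chosen without reference to $C$ will in general slice through $C$. Aiming at $H\cap B=\emptyset$ is moreover stronger than necessary, and since $B$ need not be convex it is not a set one can hope to separate from $X_0$ by a hyperplane.

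The paper's proof repairs exactly these two points by separating not the point $y_0$ but the convex set $C$ from $X_0$. It first fattens $C$ to the open convex set $G(n_0)=\bigcup_{x\in C}\mo[\gamma]{x}{\frac{1}{n_0}}$ and shows, by a compactness argument inside the finite-dimensional (hence closed, cf.\ \cref{result:finite_dimensional}) subspace $X_0$, that $G(n_0)\cap X_0=\emptyset$ for some $n_0$; then an Eidelheit-type separation theorem for asymmetric normed spaces \cite[Theorem~2.2.8]{Cobzas2013} yields a \emph{continuous} linear functional $h$ vanishing on $X_0$ and strictly negative on $G(n_0)$, so that $H=h^{-1}(0)$ is closed, contains $X_0$, and satisfies $H\cap C=\emptyset$, which suffices because $Q_{X_1}(y_0)\subseteq H\cap C$ for every $X_1$ with $X_0\subset X_1\subset H$. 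To complete your argument you would have to replace the distance-based Hahn--Banach functional by such a separating functional for $C$.
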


\begin{proof}
As $X_0$ is not coproximinal, there exists a point $y_0\in X$ such that
\begin{equation}
Q_{X_0}(y_0)=X_0\cap\underbrace{\bigcap_{z\in X_0}\mc[\gamma]{z}{\gamma(y_0-z)}}_{\eqdef C}=\emptyset.\label{eq:not-coproximinal}
\end{equation}
Then there exists some $n_0 \in \setn{1,2,\ldots}$ such that
\begin{equation}
\label{eq:open-intersection}
X_0 \cap \underbrace{\bigcup_{x \in C} \mo[\gamma]{x}{\frac{1}{n_0}}}_{\eqdef\;G(n_0)} = \emptyset.
\end{equation}
Indeed, suppose, contrary to \eqref{eq:open-intersection}, that there are $x_n \in X_0 \cap G(n)$ for $n=1,2,\ldots$
We fix some $z_0 \in X_0$ and obtain 
\begin{equation*}
(x_n)_{n=1}^\infty \subset X_0 \cap G(1) \subset X_0 \cap \bigcup_{x \in \mc[\gamma]{z_0}{\gamma(y_0-z_0)}}\mo[\gamma]{x}{1} \subset X_0 \cap \mo[\gamma]{z_0}{\gamma(y_0-z_0)+1}.
\end{equation*}
So $(x_n)_{n=1}^\infty$ is bounded in the (closed) finite-dimensional subspace $X_0$ (cf.\ \cref{result:finite_dimensional}), thus having a convergent subsequence. W.l.o.g., $(x_n)_{n=1}^\infty$ converges itself, i.e., $\lim_{n \to\infty} \gamma(x_n-x_0)=0$ for some $x_0 \in X_0$.
By \cref{result:finite_dimensional}, also $\lim_{n \to\infty} \gamma(x_0-x_n)=0$. Since $x_n \in G(n)$, there exist $\tilde{x}_n \in C$ such that $\gamma(x_n-\tilde{x}_n)< \frac{1}{n}$. Then every $z \in X_0$ satisfies
\begin{equation*}
\gamma(x_0-z) \leq \gamma(x_0-x_n)+\gamma(x_n-\tilde{x}_n)+\gamma(\tilde{x}_n-z) < \gamma(x_0-x_n)+\frac{1}{n}+\gamma(y_0-z)
\end{equation*}
for all $n=1,2,\ldots$, whence $x_0 \in \mc[\gamma]{z}{\gamma(y_0-z)}$.
Therefore $x_0 \in C$ in addition to $x_0 \in X_0$. This contradicts \eqref{eq:not-coproximinal} and proves \eqref{eq:open-intersection}. 

Now Theorem~2.2.8 from \cite{Cobzas2013} says that there is a continuous linear functional $h$ from $(X,\gamma)$ into $\RR$ that separates the open convex set $G(n_0)$ from the disjoint linear subspace $X_0$ in the sense that 
\begin{equation}
\label{eq:separation}
h(x) < h(z)=0 \quad\text{ for all }\quad x \in G(n_0),\, z \in X_0.
\end{equation}
The null space $H\defeq h^{-1}(0)$ is a closed $1$-codimensional subspace of $X$.

Let $X_1$ be a linear subspace of $X$ such that $X_0\subset X_1\subset H$.
Then we have
\begin{align*}
Q_{X_1}(y_0)&=X_1\cap\bigcap_{z\in X_1}\mc[\gamma]{z}{\gamma(y_0-z)}\subset H\cap\bigcap_{z\in X_0}\mc[\gamma]{z}{\gamma(y_0-z)}\\
&=H\cap C \stackrel{\eqref{eq:open-intersection}}{\subset} H \cap G(n_0) \stackrel{\eqref{eq:separation}}{=}\emptyset,
\end{align*}
which means that $X_1$ is not coproximinal.
\end{proof}

\begin{Kor}
\label{result:necessity-inner-product}
Let $(X,\gamma)$ be a generalized Minkowski space of dimension $\dim(X)\geq 3$. If every closed $1$-codimensional linear subspace $K$ of $X$ is coproximinal, then $\gamma$ is a norm induced by an inner product on $X$.
\end{Kor}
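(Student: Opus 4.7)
The plan is to combine the contrapositive of Proposition~\ref{result:intermediate-subspaces} with Theorem~\ref{result:straight-line-coproximinality} and the classical Franchetti--Furi characterization \cite[Theorem~1]{FranchettiFu1972}, localized to $3$-dimensional subspaces of $X$. The first thing I would do is apply Proposition~\ref{result:intermediate-subspaces} in its contrapositive form: no finite-dimensional subspace $X_0\subset X$ can fail to be coproximinal, for otherwise there would be a closed $1$-codimensional $H\supset X_0$ that also fails to be coproximinal, contradicting the hypothesis. In particular, every $1$-dimensional linear subspace of $X$ is coproximinal, hence (by translation invariance, as used in the proof of Theorem~\ref{result:straight-line-coproximinality}) so is every straight line. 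Theorem~\ref{result:straight-line-coproximinality} then guarantees that $\gamma$ is a norm.

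Next I would fix an arbitrary $3$-dimensional linear subspace $X_0\subset X$. Any $2$-dimensional subspace $K_0\subset X_0$ is closed in $\tau_\gamma$ by Lemma~\ref{result:finite_dimensional}, and the first step already yields that $K_0$ is coproximinal in $X$. But the defining inequality $\gamma(x-z)\leq\gamma(y-z)$, $z\in K_0$, makes no reference to the ambient space, so coproximinality of $K_0$ inside $X_0$ follows automatically for every $y\in X_0\subset X$. Thus $(X_0,\gamma|_{X_0})$ is a $3$-dimensional normed space---automatically Banach, being finite-dimensional---in which every closed $1$-codimensional subspace is coproximinal, and \cite[Theorem~1]{FranchettiFu1972} applies to give that $\gamma|_{X_0}$ is induced by an inner product.

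Finally I would globalize: given any two vectors $a,b\in X$, the hypothesis $\dim(X)\geq 3$ lets me choose a $3$-dimensional $X_0\ni a,b$, and the previous paragraph gives the parallelogram identity
\[
\gamma(a+b)^2+\gamma(a-b)^2=2\gamma(a)^2+2\gamma(b)^2
\]
inside $X_0$, and in particular at the chosen pair $a,b$. Since $a,b$ were arbitrary, the parallelogram law holds on all of $X$, so the Jordan--von Neumann theorem produces the inner product on $X$ that induces $\gamma$. The critical (and in fact the only subtle) step is the second one: the observation that the best-coapproximation relation is \emph{intrinsic} to $K_0$ and the point $y$, which is what allows me to sidestep the completeness hypothesis of the original Franchetti--Furi theorem by restricting to finite-dimensional subspaces where completeness is for free.
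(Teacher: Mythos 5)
Your proposal is correct and follows essentially the same route as the paper's own proof: the contrapositive of Proposition~\ref{result:intermediate-subspaces} yields coproximinality of all $1$- and $2$-dimensional subspaces, Theorem~\ref{result:straight-line-coproximinality} upgrades $\gamma$ to a norm, the Franchetti--Furi theorem is applied to (automatically complete) $3$-dimensional subspaces, and the Jordan--von Neumann criterion globalizes the inner product. The intrinsic-ness of $Q_{K_0}(y)$ that you single out as the subtle point is exactly the observation the paper also relies on.
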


\begin{proof}
By \cite[Section~4]{JordanNe1935}, it is sufficient to show that the restriction of $\gamma$ to every $2$-dimensional linear subspace of $X$ is a norm induced by an inner product.
To this end, fix an arbitrary $3$-dimensional subspace $X_0$ of $X$.
By \cref{result:intermediate-subspaces}, all $1$-dimensional or $2$-dimensional linear subspaces of $X$ are coproximinal.
In particular, all $1$-dimensional or $2$-dimensional linear subspaces of $X_0$ are coproximinal, also when viewed in $(X_0,\gamma\vert_{X_0})$.
By \cref{result:straight-line-coproximinality}, the generalized Minkowski space $(X_0,\gamma\vert_{X_0})$ is in fact a normed space (of dimension $3$), and since each of its $2$-dimensional linear subspaces is coproximinal, \cite[Theorem~1]{FranchettiFu1972} shows that $\gamma\vert_{X_0}$ is a norm induced by an inner product.
As every $2$-dimensional linear subspace of $X$ is a linear subspace of some choice of $X_0$, we have shown that the restriction of $\gamma$ to every $2$-dimensional linear subspace of $X$ is in fact a norm induced by an inner product.
\end{proof}

%%%%%%%%%%%%%%

\subsection{The necessity of completeness}

First we note that in the setting of an inner product space points of best coapproximation within a linear subspace are found by the orthogonal projection onto that space, see also \cite[Section 2]{FranchettiFu1972}.

\begin{Lem}
\label{result:projection}
Let $K$ be a linear subspace of an inner product space $(X,\skpr{\cdot}{\cdot})$, let $y_0 \in X$ and let $z_0 \in K$. Then $z_0 \in Q_K(y_0)$ if and only if $y_0-z_0 \perp K$, i.e., $\skpr{y_0-z_0}{z}=0$ for all $z \in K$.
\end{Lem}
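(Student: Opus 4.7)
The plan is to reduce the inequality defining best coapproximation to the well-known inner-product identity
\[
\gamma(a+b)^2=\gamma(a)^2+2\skpr{a}{b}+\gamma(b)^2,
\]
valid for all $a,b\in X$, where $\gamma(\cdot)^2=\skpr{\cdot}{\cdot}$. Throughout, I would use linearity of $K$ twice: once to guarantee $z_0-z\in K$ and once to guarantee $z_0+tv\in K$.

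For the backward direction, assume $y_0-z_0\perp K$. For every $z\in K$, the vector $z_0-z$ again lies in $K$, so $\skpr{y_0-z_0}{z_0-z}=0$. Writing $y_0-z=(y_0-z_0)+(z_0-z)$ and applying the identity gives
\[
\gamma(y_0-z)^2=\gamma(y_0-z_0)^2+\gamma(z_0-z)^2\geq \gamma(z_0-z)^2,
\]
which is exactly the coapproximation inequality; hence $z_0\in Q_K(y_0)$.

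For the forward direction, assume $z_0\in Q_K(y_0)$ and fix $v\in K$. For each $t\in\RR$ the point $z\defeq z_0+tv$ is in $K$, so the coapproximation inequality $\gamma(z_0-z)^2\leq\gamma(y_0-z)^2$ reads
\[
t^2\gamma(v)^2\leq \gamma(y_0-z_0)^2-2t\skpr{y_0-z_0}{v}+t^2\gamma(v)^2.
\]
Cancelling $t^2\gamma(v)^2$ yields $2t\skpr{y_0-z_0}{v}\leq \gamma(y_0-z_0)^2$ for every $t\in\RR$. Since the right-hand side is a fixed constant while the left-hand side is unbounded above as $t\to\pm\infty$ unless the coefficient of $t$ vanishes, one concludes $\skpr{y_0-z_0}{v}=0$. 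As $v\in K$ was arbitrary, $y_0-z_0\perp K$.

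There is no real obstacle here; the only subtlety is invoking the linear structure of $K$ both to reach $z_0-z\in K$ in one direction and to vary $t\in\RR$ freely in the other. The argument is elementary once the inner-product expansion is in place.
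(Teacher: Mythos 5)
Your proof is correct and follows essentially the same route as the paper: parametrize the points of $K$ as $z_0+\lambda v$, expand the squared norm via the inner product, cancel the quadratic term, and let the real parameter range freely to force the inner product to vanish. The paper merely packages both directions into a single chain of equivalences, whereas you separate them and phrase the sufficiency direction as the Pythagorean identity; the content is identical.
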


\begin{proof}
We reformulate the claim $z_0 \in Q_K(y_0)$ successively:

\begin{align*}
&\Leftrightarrow&&\mnorm{z_0-(z_0+\lambda z)} \leq \mnorm{y_0-(z_0+\lambda z)}\text{ for all }z \in K,\lambda \in \RR\\
&\Leftrightarrow&&\mnorm{\lambda z}^2 \leq \mnorm{(y_0-z_0)-\lambda z}^2\text{ for all }z \in K,\lambda \in \RR\\
&\Leftrightarrow&&\mnorm{\lambda z}^2 \leq \mnorm{y_0-z_0}^2-2\skpr{y_0-z_0}{\lambda z}+\mnorm{\lambda z}^2\text{ for all }z \in K,\lambda \in \RR\\
&\Leftrightarrow&&2\lambda\skpr{y_0-z_0}{z}\leq\mnorm{y_0-z_0}^2\text{ for all }z \in K,\lambda \in \RR\\
&\Leftrightarrow&&\skpr{y_0-z_0}{z}=0\text{ for all }z \in K
\end{align*}
\end{proof}

Now the necessity of completeness is obtained via the Riesz representation theorem of continuous linear functionals.

\begin{Lem}
\label{result:necessity-complete}
The following are equivalent for every inner product space $(X,\skpr{\cdot}{\cdot})$.
\begin{enumerate}[label={(\roman*)},leftmargin=*,align=left,noitemsep]
\item{The vector space $X$ is complete w.r.t.\ the norm $\mnorm{\cdot}$ induced by $\skpr{\cdot}{\cdot}$.\label{complete}}
\item{For every continuous linear functional $f:X\to\RR$, there exists $x_0 \in X$ such that $f(x)=\skpr{x}{x_0}$ for all $x \in X$ (Riesz representation theorem).\label{Riesz}}
\item{Every closed $1$-codimensional linear subspace $K$ of $X$ is coproximinal.\label{coproximinal}}
\end{enumerate}
\end{Lem}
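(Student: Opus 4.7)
The plan is to close the cycle \ref{complete} $\Rightarrow$ \ref{Riesz} $\Rightarrow$ \ref{coproximinal} $\Rightarrow$ \ref{Riesz} $\Rightarrow$ \ref{complete}. Of these implications, \ref{complete} $\Rightarrow$ \ref{Riesz} is the classical Riesz representation theorem for Hilbert spaces and may be cited without reproduction. The equivalence \ref{Riesz} $\Leftrightarrow$ \ref{coproximinal} is a short translation via \cref{result:projection}, while the substantive work is \ref{Riesz} $\Rightarrow$ \ref{complete}, which I expect to be the main obstacle.

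For \ref{Riesz} $\Rightarrow$ \ref{coproximinal}, I would use that every closed $1$-codimensional subspace $K$ of $X$ has the form $\ker(f)$ for some continuous linear functional $f$ (continuity being equivalent to closedness of the kernel). Writing $f=\skpr{\cdot}{x_0}$ by \ref{Riesz}, one checks for any $y\in X$ that $z_0\defeq y-\frac{f(y)}{\mnorm{x_0}^2}x_0$ lies in $K$ while $y-z_0$ is a scalar multiple of $x_0$, hence orthogonal to $K$; \cref{result:projection} then yields $z_0\in Q_K(y)$. Conversely, for \ref{coproximinal} $\Rightarrow$ \ref{Riesz}, given a continuous linear functional $f\neq 0$, the closed $1$-codimensional subspace $\ker(f)$ is coproximinal by hypothesis; for any $y$ with $f(y)\neq 0$ and any $z_0\in Q_{\ker(f)}(y)$, \cref{result:projection} gives $y-z_0\perp\ker(f)$, and a short computation on the direct sum decomposition $X=\ker(f)\oplus\RR(y-z_0)$ shows that $x_0\defeq\frac{f(y)}{\mnorm{y-z_0}^2}(y-z_0)$ represents $f$ via the inner product.

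The heart of the proof is \ref{Riesz} $\Rightarrow$ \ref{complete}. Given a Cauchy sequence $(x_n)_{n=1}^\infty\subset X$, I would construct a candidate limit via \ref{Riesz} and then bootstrap pointwise convergence of inner products to norm convergence. The Cauchy--Schwarz inequality makes $(\skpr{y}{x_n})_n$ Cauchy in $\RR$ for every fixed $y\in X$, so $f(y)\defeq\lim_{n\to\infty}\skpr{y}{x_n}$ defines a linear functional on $X$; it is continuous because Cauchy sequences are bounded, giving $|f(y)|\leq(\sup_n\mnorm{x_n})\mnorm{y}$. By \ref{Riesz}, $f=\skpr{\cdot}{x_0}$ for some $x_0\in X$. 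To conclude $\mnorm{x_n-x_0}\to 0$, I would rewrite $\mnorm{x_n-x_0}^2=\skpr{x_n-x_0}{x_n}-\skpr{x_n-x_0}{x_0}=\lim_{m\to\infty}\skpr{x_n-x_0}{x_n-x_m}$, using that $\skpr{x_n-x_0}{x_0}=f(x_n-x_0)=\lim_m\skpr{x_n-x_0}{x_m}$, and then apply Cauchy--Schwarz to obtain $\mnorm{x_n-x_0}\leq\lim_{m\to\infty}\mnorm{x_n-x_m}$, whose right side vanishes as $n\to\infty$ by the Cauchy property.

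The main obstacle is precisely this last upgrade, since pointwise convergence $\skpr{y}{x_n}\to\skpr{y}{x_0}$ for all $y$ is in general strictly weaker than $\mnorm{x_n-x_0}\to 0$; the extra leverage needed to close the gap is supplied by the Cauchy hypothesis on the sequence.
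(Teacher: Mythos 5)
Your proposal is correct, and it overlaps with the paper's proof in its core mechanism (using \cref{result:projection} to translate coproximinality into orthogonality, and the explicit construction $x_0=\frac{f(y_0)}{\mnorm{y_0-z_0}^2}(y_0-z_0)$ for \ref{coproximinal}$\Rightarrow$\ref{Riesz}, which is essentially verbatim the paper's argument), but the surrounding cycle is organized differently. The paper cites \cite[Theorem 3.3.5]{Istratescu1987} for the full equivalence \ref{complete}$\Leftrightarrow$\ref{Riesz} and then derives \ref{coproximinal} from \ref{complete} via the orthogonal projection theorem (well-defined because a closed subspace of a complete space is complete); you instead derive \ref{coproximinal} from \ref{Riesz} by exhibiting the best coapproximation $z_0=y-\frac{f(y)}{\mnorm{x_0}^2}x_0$ explicitly, which avoids invoking the projection theorem, and you supply a full proof of the nontrivial direction \ref{Riesz}$\Rightarrow$\ref{complete} (the paper outsources this to the reference). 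Your completeness argument is sound: the identity $\mnorm{x_n-x_0}^2=\lim_{m\to\infty}\skpr{x_n-x_0}{x_n-x_m}$ together with Cauchy--Schwarz gives $\mnorm{x_n-x_0}\leq\lim_{m\to\infty}\mnorm{x_n-x_m}$ (the inner limit exists by the reverse triangle inequality), and the right-hand side tends to $0$ by the Cauchy property. The net effect is a more self-contained proof at the cost of reproducing a classical argument; the paper's version is shorter but leans on the cited equivalence \ref{complete}$\Leftrightarrow$\ref{Riesz}. One small point worth making explicit in your write-up of \ref{Riesz}$\Rightarrow$\ref{coproximinal}: a $1$-codimensional subspace is the kernel of a nonzero linear functional, and closedness of the kernel is equivalent to continuity of that functional, so \ref{Riesz} indeed applies --- you state this, and it is the same fact the paper uses in the reverse direction.
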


\begin{proof}
For \ref{complete}$\Leftrightarrow$\ref{Riesz}, see \cite[Theorem 3.3.5]{Istratescu1987}. For \ref{complete}$\Rightarrow$\ref{coproximinal}, we use the fact that the orthogonal projection $P_K:X\to K$ is well-defined by \ref{complete} (cf.\ \cite[Theorem 3.3.5 and Definition 3.3.9]{Istratescu1987}), and that $P_K$ maps every $y \in X$ onto $P_K(y) \in Q_K(y)$ by \cref{result:projection}. 

For \ref{coproximinal}$\Rightarrow$\ref{Riesz}, let $f:X\to\RR$ be a continuous linear functional.
We can assume that $f \neq 0$.
Then the null space $K\defeq f^{-1}(0)$ is a closed $1$-codimensional subspace of $X$. We pick $y_0 \in X \setminus K$.
By \ref{coproximinal}, there is $z_0 \in Q_K(y_0)$. \cref{result:projection} yields $y_0-z_0 \perp K$.
We put $x_0\defeq\frac{f(y_0)}{\mnorm{y_0-z_0}^2}(y_0-z_0)$.
For every $x \in X$, there exist $z \in K$ and $\lambda \in \RR$ such that $x=z+\lambda(y_0-z_0)$, since $X=K \oplus \lin{\setn{y_0-z_0}}$.
Thus
\begin{align*}
f(x)&=f(z+\lambda(y_0-z_0))\\
&=f(z-\lambda z_0)+\lambda f(y_0)\\
&=0+\skpr{\lambda(y_0-z_0)}{\frac{f(y_0)}{\mnorm{y_0-z_0}^2}(y_0-z_0)},\quad\text{since $z,z_0 \in K=f^{-1}(0)$},\\
&=\skpr{z}{x_0}+\skpr{\lambda(y_0-z_0)}{x_0},\quad\text{since $x_0 \perp K$},\\
&=\skpr{x}{x_0}.
\end{align*}
\end{proof}

%%%%%%%%%%%%%%%%%%

\subsection{Conclusion}

The implication \ref{Hilbert-original}$\Rightarrow$\ref{Hilbert-coproximinal} from \cref{result:characterization-hilbert} can be found in \cite[Theorem~1]{FranchettiFu1972}. (In fact, using the reformulation from \cref{result:projection} it is folklore.) For \ref{Hilbert-coproximinal}$\Rightarrow$\ref{Hilbert-original}, we combine \cref{result:necessity-inner-product} with \cref{result:necessity-complete}.

\textbf{Acknowledgements.} T.J. would like to acknowledge support by the DFG Ul-403/2-1 and NUTRICON project.

\providecommand{\bysame}{\leavevmode\hbox to3em{\hrulefill}\thinspace}
\providecommand{\href}[2]{#2}

\end{document}